\documentclass[11pt]{amsart}
\usepackage{amsthm}
\usepackage{amsmath}
\usepackage{amssymb}
\usepackage{enumerate}
\usepackage{tikz}

\newtheorem{theorem}{Theorem}[section]
\newtheorem{lemma}[theorem]{Lemma}

\newtheorem{corollary}[theorem]{Corollary}
\newtheorem{definition}[theorem]{Definition}

\newcommand \F {\mathbb F}

\newcommand \irr {\textup{Irr}}

\newcommand \ibr {{\textup{IBr}}_p}

\newcommand \opd {{{\bf{O}}_{p'}}}
\newcommand \op {{{\bf{O}}_{p}}}

\newcommand{\dl}{\mathrm {dl}}
\newcommand{\cd}{{\mathrm {cd}}}
\newcommand \dvd {\hbox {\big|}}
\newcommand \ndvd {\hbox {/}\kern-5pt\dvd}
\newcommand \nrml {\lhd}

\newcommand \cent {{\bf{C}}}
\def \< {\langle}
\def \> {\rangle}

\begin{document}

\title{A height-zero type result for blocks of solvable groups}

\author{James P. Cossey}

\address{Department of Mathematics, University of Akron, Akron, OH 44325}

\email{cossey@uakron.edu}

\keywords{Brauer character, Finite groups, Representations, Solvable groups}
\subjclass[2000]{Primary 20C20}

\begin{abstract}  Let $B$ be a $p$-block of a finite group $G$ with defect group $D$.  The more difficult direction of the recently proven height zero conjecture says that $D$ is abelian if every character in $\irr(B)$ has height zero.  We consider a smaller set than $\irr(B)$.  In particular, if $\varphi \in \ibr(B)$, we let $\irr(\varphi)$ be the set of characters $\chi \in \irr(G)$ such that $\varphi$ is a constituent of $\chi^o$.   Now suppose $G$ is solvable and $\varphi$ is a height zero Brauer character in some block $B$ of $G$ with defect group $D$.  Here we show that if every character in $\irr(\varphi)$ has height zero, then the defect group $D$ of the block containing $\varphi$ is abelian for $p \geq 5$ and almost abelian for $p = 2$ or $3$.  This has a nice consequence for primitive characters of $p$-complements in solvable groups.
  
\end{abstract}

\maketitle

\section{Introduction} 

Let $G$ be a finite group and $p$ a prime.  The Ito-Michler theorem \cite{itomichler} tells us that $G$ has a normal abelian Sylow $p$-subgroup if and only if $p \ndvd \chi(1)$ for every character $\chi \in \irr(G)$.  Now define $\irr_p(G)$ to be the set of $\chi \in \irr(G)$ that have degree divisible by $p$.  Thus, one can state the difficult direction of the Ito-Michler theorem as saying that if $|\irr_p(G)| = 0$, then $G$ has a normal, abelian Sylow $p$-subgroup.

One can get a similar result by looking at a set smaller than $\irr(G)$.  If $B$ is a block of $G$ with defect group $D$, then the (recently proven, see \cite{heightzero}) height zero conjecture proves that $D$ is abelian if and only if every character in $\irr(B)$ has height zero.  If we define $\irr_p(B)$ as being the set of $\chi \in \irr(B)$ such that $p$ has positive height, then we can restate the more difficult direction of the height zero conjecture as saying that if $|\irr_p(B)| = 0$, then $D$ is abelian.

In this paper we show, for solvable groups, that one can look at an even smaller subset of $\irr(G)$ to determine if  the defect group of the associated block is abelian, or at least almost abelian (in the sense of having a small derived length).  Before we state our main result, we need some definitions.

Recall (see for instance \cite{navarrobook}) that if $\chi \in \irr(G)$, then there is a unique decomposition $$\chi^o = \sum_{\varphi \in \ibr(G)} d_{\chi \varphi} \varphi,$$ where $\chi^o$ denotes the restriction of $\chi$ to the $p$-regular elements of $G$.  Also, we let $h(\chi)$ denote the height of the character $\chi$.

\begin{definition}  Fix a Brauer character $\varphi$ in a $p$-block $B$ of a group $G$.  We define the set $$\irr(\varphi) = \{ \chi \in \irr(G) | d_{\chi \varphi} \neq 0 \}.$$  We define $$\irr_+(\varphi) = \{ \chi \in \irr(\varphi) | h(\chi) > 0  \}.$$    Finally, we define  $k_+(\varphi) = |\irr_+(\varphi)|.$
\end{definition}

Of course, another way to put this is that $\irr(\varphi)$ consists of the irreducible constituents of the projective indecomposable character for $\varphi$.  Our first main result is the following:

\begin{theorem}\label{main1}  Let $B$ be a $p$-block of a solvable group $G$ with defect group $D$.  Suppose that $\varphi \in \ibr(B)$ has height zero and that $k_+(\varphi) = 0$.  If $p \geq 5$, then $D$ is abelian.  If $p = 2$, then $\dl(D) \leq 2$, and if $p = 3$, then $\dl(D) \leq 3$.
\end{theorem}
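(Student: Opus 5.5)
Since the statement concerns $p$-solvable groups, the plan is to use Fong reductions together with the structure of Brauer characters of solvable groups. The target is to show that the hypotheses force a $p$-complement-type configuration in which every $\chi\in\irr(\varphi)$ has height zero. This in turn forces every irreducible character of a Sylow-type subgroup lying over a certain linear character to be linear. The derived-length bounds then come from known results on orbits and regular orbits in coprime actions.

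\textbf{Step 1: Fong reduction.} First I would argue by induction on $|G|$. Let $N=\opd(G)$ and let $\theta\in\irr(N)$ lie under $\varphi$, with stabilizer $T=G_\theta$. The Fong correspondence (Clifford correspondence) gives a bijection between blocks of $T$ over $\theta$ and blocks of $G$ over $\theta$. This bijection preserves defect groups and heights. It also sends $\ibr(\hat\varphi)$-type data to the corresponding data: decomposition numbers are preserved under induction from $T$, so $\irr(\varphi)$ corresponds to $\irr(\psi)$, where $\psi^G=\varphi$. So if $T<G$, induction finishes the reduction. Hence we may assume $\theta$ is $G$-invariant. Passing to a character triple isomorphism (central extension), we may then assume $N$ is central and cyclic. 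In that case $B$ has defect group a Sylow $p$-subgroup $P$ of $G$, and $\op(G/N)\neq 1$. Brauer characters of $G$ are then in bijection with $\ibr(G/\op(G))$, while $D=P$.

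\textbf{Step 2: translate to a $p$-complement.} Since $G$ is $p$-solvable and $\varphi$ has height zero, $\varphi(1)_p=|G:P|_p$ trivially. By the Fong–Swan theorem, $\varphi$ lifts to some $\chi\in\irr(G)$. I would use Isaacs' $\pi$-partial characters (the $B_{p'}$-lift) together with the standard fact that $\varphi$ restricts irreducibly to a $p$-complement $H$ when $\varphi$ is primitive-like. Better is to use the known description: $\irr(\varphi)\supseteq\{\chi_0\lambda\}$-type characters. Concretely, if $\chi_0$ is the canonical lift of $\varphi$, then for each $\beta\in\irr(G/\opd$-structure$)$ of $p$-power degree related to $P$, the product is in $\irr(\varphi)$. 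The key claim to establish is: if all of these have height zero, then for $Q=\op(G)$ (after reduction), every $\gamma\in\irr(P)$ lying over a suitable $H$-orbit in $\irr(Q)$ is linear. Equivalently, $P$ acts on $\irr(Q)$, and characters of $G$ lying over $\lambda\in\irr(Q)$ with $p\mid|G:G_\lambda|$ have positive height. This forces $P$ to stabilize every relevant $\lambda$, that is, $P$ fixes characters of $Q$ in $H$-orbits of large size.

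\textbf{Step 3: orbit arguments.} From Step 2, I would use $Q/\Phi(Q)$ as a faithful $G/Q$-module (after reductions, $\cent_G(Q)\le Q$). The condition should then say that $P/Q$ stabilizes some element in every $H$-orbit, or some regular-orbit-type condition. By results of Dolfi, Espuelas, Yang, and Navarro–Wolf on regular orbits of $p$-groups in coprime/solvable linear groups, this gives: for $p\ge5$, $P/Q$ has a regular orbit. For $p\ge5$ this should force $P=Q$ and $Q$ abelian. For $p=2,3$, the exceptional configurations allow $\dl(P/Q)\le1$ or $2$, yielding $\dl(D)\le2$ or $\dl(D)\le3$.

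\textbf{Main obstacle.} I expect Step 2 to be the main obstacle. The difficulty is producing, from a single height-zero $\varphi$, enough ordinary characters in $\irr(\varphi)$ to control all of $\irr(Q)$ and force $Q$ abelian. My first attempt would be to multiply a $p$-rational lift of $\varphi$ by characters of $G$ inflated from $G/\opd(G)$ of $p$-power degree. Since these vanish appropriately on $p$-singular elements, their restrictions to $p$-regular elements are multiples of $1$, so the products still contain $\varphi$ in their reduction.
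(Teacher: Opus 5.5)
Your Step 1 (Fong--Reynolds, then a character triple isomorphism making $N=\opd(G)$ central, so that $D$ is Sylow and heights are just $p$-parts of degrees) matches the paper. Step 2, which you yourself flag, is a genuine gap, and the mechanism you suggest for it is false. A character of $G/\opd(G)$ of $p$-power degree need not restrict to $p$-regular elements as a multiple of $1_G$: for $G=S_4$, $p=2$, $\varphi=1_G$, the degree-$2$ character $\chi$ has $\chi^o$ equal to the other irreducible Brauer character, so $1_G\cdot\chi\notin\irr(\varphi)$. Such products are also not irreducible in general.

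The missing idea is the elementary Lemma~\ref{overlemma}. If $L\nrml G$, $\theta\in\ibr(L)$ lies under $\varphi$ and $\psi\in\irr(\theta)$, then some constituent of $\psi^G$ lies in $\irr(\varphi)$, because $(\psi^G)^o=(\psi^o)^G$ contains $\theta^G$, which contains $\varphi$. Take $L=N\times Q$ with $Q=\op(G)$, and $\theta=(\alpha\times 1_Q)^o$, where $\alpha\in\irr(N)$ is the $G$-invariant character covered by $B$. Then $(\alpha\times\lambda)^o=\lambda(1)\theta$ for every $\lambda\in\irr(Q)$. So some $\chi\in\irr(\varphi)$ lies over $\alpha\times\lambda$, and $\lambda(1)\,|G:G_\lambda|$ divides $\chi(1)$. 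Hence $k_+(\varphi)=0$ forces every $\lambda\in\irr(Q)$ to be linear, i.e.\ $Q$ is abelian, and $p\nmid|G:G_\lambda|$ for all $\lambda$. The paper obtains Theorem~\ref{main1} as the case $k_+(\varphi)=0$ of Theorem~\ref{main2}, whose nonabelian case uses Taketa's theorem and induction; with $k_+(\varphi)=0$ that case cannot occur.

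Step 3 also rests on the wrong input. Abelianness of $Q$ cannot come from an orbit theorem; it comes from the nonlinear $\lambda$ argument above. The relevant module is $\irr(K)$ with $K=Q\Phi(G)/\Phi(G)$, on which $G/{\bf F}(G)$ acts faithfully and completely reducibly by Gasch\"utz's theorem. Use this rather than $Q/\Phi(Q)$, which need not be completely reducible; note also that $\cent_G(Q)\le Q$ fails once $N\neq 1$. This action is in characteristic $p$ with $p$ possibly dividing $|G/{\bf F}(G)|$, so coprime regular-orbit results do not apply directly.

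More seriously, a regular orbit of a Sylow $p$-subgroup does not contradict all $G$-orbits having $p'$-length. For example, $GL(2,2)\cong S_3$ on $\F_2^2$ has orbit lengths $1$ and $3$, yet its Sylow $2$-subgroup has a regular orbit. So "regular orbit, hence $P=Q$" is a non sequitur.

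What you need is exactly Theorem~\ref{orbits}: a solvable group acting faithfully and completely reducibly in characteristic $p$ with all orbits of $p'$-length has Sylow $p$-subgroups that are trivial for $p\ge 5$, abelian for $p=2$, and of derived length at most $2$ for $p=3$. The paper proves this by reducing to irreducible $V$. It then handles the quasiprimitive case (semilinear, where Lemma~\ref{semilinearaction} gives $p\nmid|G|$, or $G\in\{GL(2,3),SL(2,3)\}$). The imprimitive case uses Manz--Wolf's Theorem 9.3 together with Huppert's classification of solvable transitive linear groups. Applied to $G/{\bf F}(G)$, whose Sylow $p$-subgroup is isomorphic to $D/Q$, it gives $\dl(D)\le\dl(D/Q)+1\le c$, as required.
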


We will in fact prove a generalization of this result, in terms of the positive height characters in $\irr(\varphi)$ - where $\varphi$ has height zero - and the derived length of the defect group of the block containing $\varphi$.  

Theorem \ref{main1} has an interesting corollary, which can be stated independently of block theory.

\begin{corollary}\label{primcor}  Let $H$ be a $p$-complement of a solvable group $G$.  Let $\alpha \in \irr(H)$ be primitive, and suppose that every constituent of $\alpha^G$ has $p'$-degree.  If $S$ is a Sylow $p$-subgroup of $G$, then $S$ is abelian if $p \geq 5$.  If $p =2$, then $\dl(S) \leq 2$.  If $p = 3$, then $\dl(S) \leq 3$.
\end{corollary}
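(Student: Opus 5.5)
The plan is to derive the corollary directly from Theorem~\ref{main1}. The idea is that $\alpha^G$ is a projective character, so it is a nonnegative integer combination of projective indecomposable characters $\Phi_\varphi$. Choosing one $\varphi$ that occurs puts all of $\irr(\varphi)$ among the constituents of $\alpha^G$, and these constituents all have $p'$-degree by hypothesis.

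\emph{Step 1 (projectivity).} Since $H$ is a $p'$-group, every character of $H$ is the character of a projective module. Induction preserves projectivity, so $\alpha^G$ is a projective character of $G$; concretely, it vanishes off the $p$-regular elements. Hence
$$\alpha^G=\sum_{\varphi\in\ibr(G)} a_\varphi \Phi_\varphi$$
with integers $a_\varphi\ge 0$, and these are not all zero because $\alpha^G\neq 0$. Fix $\varphi$ with $a_\varphi>0$, and let $B$ be its block with defect group $D$. Every $\chi\in\irr(\varphi)$ is an irreducible constituent of $\Phi_\varphi$, hence of $\alpha^G$, so $p\nmid\chi(1)$.

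\emph{Step 2 (defect and heights).} Pick any $\chi\in\irr(\varphi)$. Since $\chi\in\irr(B)$ has $p'$-degree, the standard inequality $|G|_p\le |D|\,p^{h(\chi)}\chi(1)_p$ forces $|D|=|G|_p$. Thus $D$ is a Sylow $p$-subgroup, and we may take $D=S$ up to conjugacy. In a block of full defect, the height of a character is just the exponent of $p$ in its degree. Therefore every $\chi\in\irr(\varphi)$ has height zero, that is, $k_+(\varphi)=0$.

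\emph{Step 3 (height of $\varphi$).} We still need $\varphi$ itself to have height zero, i.e.\ $p\nmid\varphi(1)$. Since $G$ is solvable, the Fong--Swan theorem gives $\psi\in\irr(G)$ with $\psi^o=\varphi$. Then $d_{\psi\varphi}=1$, so $\psi\in\irr(\varphi)$. By Step 1, $\psi(1)=\varphi(1)$ is prime to $p$, and so $\varphi$ has height zero in the full-defect block $B$.

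\emph{Conclusion.} Theorem~\ref{main1} now applies to $\varphi$ and gives the stated conclusions for $D\cong S$: $S$ is abelian if $p\ge5$, $\dl(S)\le 2$ if $p=2$, and $\dl(S)\le3$ if $p=3$.

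\emph{Main obstacle.} The step needing the most care is Step 1, specifically that every constituent of $\Phi_\varphi$ really appears in $\alpha^G$. This requires the decomposition of the projective character $\alpha^G$ into the $\Phi_\varphi$ to have nonnegative coefficients, which holds because $\alpha^G$ is the character of a genuine projective module. Primitivity of $\alpha$ does not seem essential to this route. If it is needed, I would use it through Isaacs' theory of $\pi$-special characters and Fong characters in solvable groups: a primitive $\alpha$ should correspond to a single $\varphi$ with $\alpha^G=\Phi_\varphi$, which makes the choice of $\varphi$ canonical.
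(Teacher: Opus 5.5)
Your proof is correct, but it reaches the key inclusion by a different route than the paper.

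\textbf{The paper's route.} The paper derives Corollary~\ref{primcor} from Corollary~\ref{fong}. It cites Isaacs' results that primitive characters of a $p$-complement of a solvable group are Fong characters, and that for a Fong character $\alpha$ of $\varphi$ the constituents of $\alpha^G$ are exactly $\irr(\varphi)$ (indeed $\alpha^G=\Phi_\varphi$). Then, as in your Steps 2 and 3, the canonical lift of $\varphi$ gives $p\nmid\varphi(1)$ and full defect, and Theorem~\ref{main1} applies.

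\textbf{Your route.} You only need that $\irr(\varphi)$ is contained in the set of constituents of $\alpha^G$ for some $\varphi$. You obtain this for every $\alpha\in\irr(H)$ from projectivity of $\alpha^G$. The cleanest justification of nonnegativity is Frobenius reciprocity: the coefficient of $\Phi_\varphi$ in $\alpha^G$ equals $[\alpha,\varphi_H]$, because $\alpha^G$ vanishes off $p$-regular elements and $\varphi_H$ is an ordinary character of the $p'$-group $H$. Vanishing off $p$-regular elements by itself only gives integer coefficients, so the ``hence'' in Step 1 should rest on this computation or on the projective-module argument, as you indicate.

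\textbf{What each approach buys.} Your argument never uses primitivity or Fong-character theory, and it proves the conclusion for an arbitrary $\alpha\in\irr(H)$. This is formally stronger than Corollaries~\ref{primcor} and~\ref{fong}. It can also be recovered from Corollary~\ref{fong}: replace $\alpha$ by a Fong character $\beta$ of any $\varphi$ with $[\alpha,\varphi_H]>0$, since $\beta^G=\Phi_\varphi$ is contained in $\alpha^G$. The paper's route gives the exact identification $\alpha^G=\Phi_\varphi$, which attaches $\varphi$ canonically to $\alpha$, as your closing remark anticipates. That identification is not needed for this corollary.

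\textbf{A slip in Step 2.} The correct relation is the equality $|D|\,\chi(1)_p=|G|_p\,p^{h(\chi)}$, equivalently that $|G:D|_p$ divides $\chi(1)$. Your inequality has $p^{h(\chi)}$ on the wrong side and, as written, does not force $|D|=|G|_p$. With the equality, $p\nmid\chi(1)$ gives $|D|\geq|G|_p$, so $D$ is Sylow and $h(\chi)=0$ at once.
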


We will actually prove slightly more here as well, replacing \lq \lq $\alpha$ is primitive" with \lq \lq $\alpha$ is a Fong character".

Finally, we mention that our main tool in proving these results is the following \lq \lq large orbit" theorem, which may prove to have other applications.

\begin{theorem}\label{orbits}  Let $G$ be a solvable group that acts faithfully and completely reducibly on a finite module $V$ of characteristic $p$.  Suppose that $$p \ndvd |G : \cent_G(v)|$$ for every $v \in V$.   Let $S$ be a Sylow $p$-subgroup of $G$.  If $p \geq 5$, then $S$ is trivial.  If $p = 2$, then $S$ is abelian.  Finally, if $p=3$, then $\dl(S) \leq 2$.
\end{theorem}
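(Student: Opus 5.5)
The plan is to first put the hypothesis into a more usable form and check that it passes to the pieces we will reduce to. The condition $p \ndvd |G:\cent_G(v)|$ says that $\cent_G(v)$ contains a Sylow $p$-subgroup of $G$. Since $S$ acts on the $G$-orbit of $v$, which has $p'$-size, $S$ fixes some point of that orbit. So the hypothesis is equivalent to
$$V=\bigcup_{g\in G}\cent_V(S)^g ,$$
that is, every vector is fixed by some Sylow $p$-subgroup. Two inheritance facts follow directly:
\begin{enumerate}[(i)]
\item If $N \nrml G$, then $\cent_N(v) \supseteq S^g\cap N$, which is a Sylow $p$-subgroup of $N$. So $N$ satisfies the hypothesis on $V$, and $V_N$ is still completely reducible by Clifford.
\item If $V=V_1\oplus\cdots\oplus V_t$ with each $V_i$ $G$-invariant, then $G/\cent_G(V_i)$ satisfies the hypothesis on $V_i$. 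Moreover $S$ embeds in $\prod_i S\cent_G(V_i)/\cent_G(V_i)$, so the derived-length bounds for each factor give the bound for $S$.
\end{enumerate}
By (ii) we may therefore assume that $V$ is irreducible. Since the action is faithful and completely reducible, $\op(G)=1$ throughout.

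Next I would handle the imprimitive case. Write $V=\mathrm{Ind}_H^G(W)$ with $W$ a primitive $H$-module, so that $G$ embeds in $(H/\cent_H(W))\wr \Sigma$ with $\Sigma$ a solvable transitive permutation group of degree $n=|G:H|$. The key step is to choose a vector $v=(w_1,\dots,w_n)$ whose coordinates $w_i$ lie in $H$-orbits that are pairwise ``different'' in a controlled way. The model is the Gluck/Seress/Dolfi argument that a solvable permutation group has a set of at most two or three blocks with small stabilizer. For a suitable $v$, the stabilizer of $v$ in $\Sigma$ has trivial Sylow $p$-subgroup when $p\ge5$, is abelian when $p=2$, and has derived length at most $2$ when $p=3$; these are exactly the orbit-stabilizer bounds for solvable permutation groups with distinguishing partitions. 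Since $\cent_G(v)$ must contain a full Sylow subgroup of $G$, this forces the $p$-part of the top group to be trivial, abelian, or of derived length at most $2$ respectively. Applying induction to $H$ acting on $W$ then controls the base group; the base group of a wreath product only adds derived length if both layers are nontrivial, and I will argue that the hypothesis at a regular-ish $v$ kills one layer. The bookkeeping of derived length across the wreath product is where I expect the primes $2$ and $3$ to produce exactly the stated exceptions.

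In the primitive case, $G$ is quasi-primitive on $V$. Let $F=\mathbf F(G)$; it has the standard structure $F=ZE$, with $Z$ cyclic and acting by scalars, and $E/Z$ a direct product of symplectic $r$-groups of order $e^2$ with $e\mid \dim V$. Here $p\nmid|F|$ because $\op(G)=1$. Then $G/F$ embeds in $\prod \mathrm{Sp}(2n_i,r_i)$ extended by Galois automorphisms, and I would split by the size of $e$. When $e$ is large, the counting bound from the displayed union,
$$|V| \le |G:\mathbf N_G(S)|\,|\cent_V(S)|,$$
combined with the usual bounds $|G| \le |V|^{c}$ and $|\cent_V(x)|\le|V|^{1/2}$ for $1\neq x\in S$, gives a contradiction unless $S=1$. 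So only the finitely many small-$e$ configurations remain; these are $e\in\{2,3,4,8,9,16\}$, or $G\le\Gamma(V)$ semilinear. In the semilinear case $G\le\Gamma(p^m)$, so $S$ is cyclic (Galois part) and the hypothesis forces $S$ to be trivial except for small $p$. The remaining small-$e$ groups, such as $\mathrm{GL}(2,3)$ and $\mathrm{SL}(2,3)$ on $\F_3^2$, $\F_5^2$, $\F_7^2$, and $3^{1+2}.\mathrm{Sp}(2,3)$, can be checked directly. They produce precisely the abelian Sylow $2$-subgroups and the $3$-groups of derived length $2$ allowed in the statement.

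The main obstacle will be the imprimitive step for $p=2,3$: showing that, even though the top permutation group and the base group each admit a nontrivial Sylow, the hypothesis at a well-chosen vector prevents their derived lengths from adding. I would try first to choose $v$ so that its $\Sigma$-stabilizer contains no element of order $p$ when $p\ge5$. For $p\le3$ I would use the inheritance in (i) applied to the base group $N=\bigcap_g H^g$ to show that $S\cap N$ is itself trivial or abelian. That would leave only the top layer to contribute derived length.
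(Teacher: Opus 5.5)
Your plan cannot be completed, because the statement is false as written for $p\ge 5$ (and, see below, for $p=2$). The step of yours that breaks is the semilinear case, where you assert that the hypothesis forces $S=1$ once $p$ is not small.

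Let $q=p^p$, let $V=\F_q$ viewed as an $\F_p$-space, and let $G=\Gamma(q)$, which has order $(q-1)p$. This $G$ is solvable and acts faithfully and irreducibly on $V$ (already $\F_q^\times$ is irreducible). Its only orbits are $\{0\}$ and $V\setminus\{0\}$, of sizes $1$ and $p^p-1\equiv -1 \pmod p$. Yet $S=\langle x\mapsto x^p\rangle$ has order $p$. Indeed, each $v\neq 0$ is fixed by $x\mapsto v^{1-p}x^p$, which generates a conjugate of $S$. So in the semilinear case the hypothesis only tells you that $S$ embeds in the Galois group, i.e.\ that $S$ is cyclic.

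The paper's proof makes the same mistake. Lemma \ref{semilinearaction} argues as if $\overline{G}=G/K$ acted on $V$, but different vectors are fixed by different lifts of the Sylow $p$-subgroup of $\overline{G}$. The error propagates: $\F_q\rtimes\Gamma(q)$ with $\varphi=1_G$ has $k_+(\varphi)=0$ and defect group $C_p\wr C_p$, which contradicts Theorem \ref{main1} for $p\ge 5$. For $p=2$ this group is exactly the paper's $S_4$.

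For $p=2$, the obstacle you flagged is real and cannot be argued away: in the imprimitive case the derived lengths of the top and base layers do add. Take $\Gamma(4)\wr S_3$ acting on $\F_4^3$. It is faithful and irreducible, and every orbit has odd size ($1,9,27,27$), because the base is transitive on $\F_4\setminus\{0\}$ in each coordinate and $S_3$ has odd orbits on subsets of a $3$-set. But $S\cong D_8\times C_2$ is nonabelian. The paper reaches ``abelian'' only by applying the faulty lemma to the semilinear group $C/\cent_C(W_i)$.

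With that lemma corrected to ``$S$ is cyclic'', the paper's outline yields $S$ abelian for $p\ge5$ and $\dl(S)\le 2$ for $p=2,3$. So only the $p=3$ clause survives as stated.

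Even for these weaker bounds, your sketch is missing the ingredient that controls the imprimitive case:
take $C\nrml G$ maximal with $V_C$ inhomogeneous;
use induction to reduce to $\mathbf{O}^{p'}(G/C)=G/C$;
apply Theorem 9.3 of Manz--Wolf, which gives $p\le 3$, $|G:C|_p=p$, and $C/\cent_C(W_i)$ transitive on $W_i\setminus\{0\}$;
then Huppert's classification bounds the base.

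Your proposed fix, applying (i) to the base group $N$, only gives $\dl(S)\le\dl(S\cap N)+\dl(S/S\cap N)$, which is too weak for either the stated or the corrected bounds.
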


As we will see, the proof of Theorem \ref{orbits} follows easily from the work of Wolf, Gluck, and others.

\section{The proof of Theorem \ref{orbits}}

Before we can prove Theorem \ref{orbits}, we need a result on semilinear groups.  Let $p$ be a prime and $n \geq 1$.  We let $V$ be the vector space of dimension $n$ over the field $\F_p$ of $p$ elements.  We may identify $V$ with the field $\F_{p^n}$ of $p^n$ elements in the natural way.  The semilinear group $\Gamma(p^n)$ has a normal subgroup $\Gamma_K$, where $\Gamma_K$ acts on $V$ via field multiplication, and $\Gamma(p^n)/\Gamma_K$ acts on $V$ as the Galois group $Gal(\F_{p^n}/\F_p)$.   If $G$ is a subgroup of $\Gamma(p^n)$, then we say $G$ is semilinear, with the assumption that $G$ acts on $V$ in the same way.  Thus $G$ has a normal subgroup $K$ such that $K$ acts on $V$ as field multiplication, and $G/K$ acts on $V$ as a subgroup of the Galois group $Gal(\F_{p^n}/\F_p)$.  See \cite{manzwolf} for more details.

\begin{lemma}\label{semilinearaction}  Let $G$ be a semilinear group, acting on $V = \F_p^n$ in the manner described above.  Suppose that for every $v \in V$, we have that $p \ndvd |G : \cent_G(v)|$.  Then $p \ndvd |G|$.
\end{lemma}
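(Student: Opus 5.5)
The plan is to reduce everything to the Galois part of $G$. Write $G \le \Gamma(p^n)$ with normal subgroup $K$ acting by field multiplication, so $K \le \F_{p^n}^\times$ and $|K|$ divides $p^n-1$. In particular $K$ is a $p'$-group. Hence a Sylow $p$-subgroup $P$ of $G$ embeds in $G/K$, which is a subgroup of the cyclic group $Gal(\F_{p^n}/\F_p)$ of order $n$. Suppose for contradiction that $P \neq 1$. Then $p \mid n$, and $G/K$ has a unique subgroup of order $p$. Every element of order $p$ in $G$ therefore lies in one of the $p-1$ cosets $K\sigma^{j}$, where $\sigma \colon x \mapsto x^{q}$ and $q=p^{n/p}$. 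Each such element has the form $g\colon x\mapsto a x^{q^{j}}$ with $a\in K$.

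The hypothesis $p \ndvd |G:\cent_G(v)|$ says exactly that $\cent_G(v)$ contains a Sylow $p$-subgroup of $G$. So every $v\neq 0$ is fixed by some element of order $p$. I would then count these fixed points. For fixed $j$, the condition $a v^{q^j}=v$ forces $a=v^{1-q^j}$. So $v$ is fixed by an element of $K\sigma^j$ iff $v^{q^j-1}\in K$. The map $v\mapsto v^{q^j-1}$ on $\F_{p^n}^\times$ has kernel $\F_q^\times$, and its image $I$ is the subgroup of index $q-1$, the same for every admissible $j$. Hence the nonzero vectors fixed by some element of order $p$ are precisely the preimage of $K\cap I$. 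There are $(q-1)|K\cap I|$ of them. Covering all of $V\setminus\{0\}$ therefore forces $K\supseteq I$, i.e.\ $|K|\ge (p^n-1)/(q-1)$. The intended contradiction would come from showing this is impossible.

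This last step is the main obstacle, and in fact I expect it to fail as stated. Take $G=\Gamma(4)\cong S_3$ acting on $\F_4$ with $p=2$. The orbits have sizes $1$ and $3$, both odd, yet $2\mid |G|$. The same failure occurs for the full group $\Gamma(p^n)$ whenever $p\mid n$, since it is transitive on $V\setminus\{0\}$, an orbit of $p'$-size $p^n-1$.

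So the counting argument above does not prove the lemma. What it does prove is the precise criterion: the hypothesis holds with $p\mid|G|$ exactly when $p\mid |G/K|$ and $K$ contains the subgroup $I$ of $(q-1)$-th powers. Before using Lemma~\ref{semilinearaction} in the proof of Theorem~\ref{orbits}, I would therefore check which extra hypothesis the intended application supplies, for instance a restriction on $|K|$ or on how $G$ sits in $\Gamma(p^n)$, so that $K\not\supseteq I$. I would also check whether the conclusion must be weakened to $P$ being cyclic. That weaker conclusion does follow immediately from the first paragraph, because $P$ embeds in the cyclic group $G/K$.
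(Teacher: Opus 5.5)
Your conclusion is right: the lemma is false as stated, so it cannot be proved. Your counterexample is valid: $\Gamma(4)\cong S_3$ on $\F_4$ has orbit sizes $1$ and $3$ but order $6$, and more generally $\Gamma(p^n)$ with $p\mid n$ works the same way.

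The paper's proof fails at its last step. For $v\neq 0$ two things are true. First, $K\cap\cent_G(v)=1$, because $K$ is semiregular on $V\setminus\{0\}$ (it is not regular on $V$). Second, the image of $\cent_G(v)$ in $\overline G=G/K$ contains the Sylow $p$-subgroup $\overline P$ of $\overline G$. But $\overline G$ does not act on $V$, so one cannot conclude that ``$\overline P$ fixes all of $V$''. Different vectors are fixed by different Sylow $p$-subgroups of $G$; these are $K$-conjugate and all map onto $\overline P$. In $\Gamma(4)$ the involution $x\mapsto ax^2$ fixes exactly $0$ and $a^{-1}$, so each nonzero vector is fixed by exactly one of the three Sylow $2$-subgroups. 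Your fixed-point count is exactly the computation the paper skips. What survives is your weaker conclusion: $K$ is a $p'$-group, so a Sylow $p$-subgroup of $G$ embeds in the cyclic group $G/K$ and is cyclic, with no orbit hypothesis needed.

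On your question about an extra hypothesis in the application: there is none. The proof of Theorem~\ref{orbits} applies the lemma to an arbitrary quasiprimitive semilinear group. For $p\ge 5$, the group $\Gamma(p^p)$ is quasiprimitive on $\F_{p^p}$ and acts faithfully and irreducibly with orbits of sizes $1$ and $p^p-1$. Yet it has a Sylow $p$-subgroup of order $p$, which contradicts the $p\ge5$ case of Theorem~\ref{orbits}.

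The failure propagates further. Take $G=\F_{p^p}\rtimes\Gamma(p^p)$ and $\varphi=1_G$, and let $K=\F_{p^p}^\times$, a $p$-complement. Then $\irr(\varphi)$ is the set of constituents of $(1_K)^G$, all of degree $1$ or $p^p-1$. But the Sylow $p$-subgroup $\F_{p^p}\rtimes C_p$ is nonabelian, which contradicts Theorem~\ref{main1} and Corollary~\ref{primcor} for $p\ge 5$. For $p=2$ the same construction is $\F_4\rtimes\Gamma(4)\cong S_4$, the paper's own example in Section~4.

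Two of your side claims need repair, though neither affects the counterexample.

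First, the elements of $G$ lying over $\sigma^j$ form a coset $Ka_j\sigma^j$, and $a_j$ may lie outside $K$. For example, $G=\langle x\mapsto\omega x^2\rangle\le\Gamma(4)$ with $\omega^3=1\neq\omega$ has $K=1$ and does not contain $\sigma$. So you must first conjugate $G$ by a field multiplication so that $\sigma\in G$; Hilbert 90 permits this.

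Second, your count proves only that $K\supseteq I$ is necessary. It is not sufficient once $|G/K|_p=p^e$ with $e\ge 2$, because $\cent_G(v)$ must contain a full Sylow $p$-subgroup, not just an element of order $p$. The correct criterion is $K\,\F_r^\times=\F_{p^n}^\times$ with $r=p^{n/p^e}$. For instance, take $p=2$, $K\le\F_{16}^\times$ of order $5$, and $G=K\rtimes\langle x\mapsto x^2\rangle$. Then $K=I$, yet $\omega\in\F_4\setminus\F_2$ has an orbit of size $10$.
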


\begin{proof}  We let $K \nrml G$ be the subgroup of $G$ that acts as multiplication on $\F_{p^n}$, and thus $G/K$ is a subgroup of $Gal(\F_{p^n}/\F_p)$.  Write ${\overline{G}} = G/K$.

Notice that $K$ acts regularly on $V$.  Thus for any $v \in V$, we have that $$K \cap \cent_G(v) = 1.$$  Thus we may consider $\cent_G(v)$ to be a subgroup of ${\overline{G}}$.  By assumption, $\cent_G(v)$ contains the unique Sylow $p$-subgroup of ${\overline{G}}$.  As this is true for every $v \in V$, then the Sylow $p$-subgroup of $\overline{G}$ fixes all of $V \cong \F_{p^n}$, and therefore is trivial.
\end{proof}

We now prove Theorem 1.4, which should be considered a \lq \lq large orbit theorem": if no orbit is \lq \lq large enough" to have size divisible by $p$, then the $p$-part of the group must be \lq \lq small". 

\begin{proof}

We work by induction on $|G| + |V|$.  First, suppose the action of $G$ on $V$ is reducible.  Then we may write $$V = V_1 \oplus V_2 \oplus \ldots V_k$$ for the homogeneous constituents of $V$, and it follows that we may write $$G = G_1 \times G_2 \times \ldots G_k,$$ where $G_i$ acts faithfully and completely reducibly on $V_i$.  For $v_i \in V_i$, we have that $|G_i : \cent_{G_i}(v_i)|$ divides $|G : \cent_G(v_i)|$, and thus $p \ndvd |G_i : \cent_{G_i}(v_i)|$.  If $S_i$ is a Sylow $p$-subgroup of $G_i$, then the inductive hypothesis tells us that $S_i$ is trivial if $p \geq 5$, $S_i$ is abelian if $p =2$, and $\dl(S_i) \leq 2$ if $p = 3$.  Thus we are done in this case.

We now have that $V$ is irreducible, and we assume that $V$ is quasiprimitive.  By Theorem 2.4 of \cite{yanghuppert} we have that either $G$ is semilinear, or $p = 3$ and $G = GL(2, 3)$ or $SL(2, 3)$.  If $G$ is semilinear, then Lemma \ref{semilinearaction} tells us that $p \ndvd |G|$.  In either case we are done.

Thus we now assume that $G$ is not quasiprimitive on $V$.  Let $C \nrml G$ be maximal such that $V_C$ is not homogeneous.  Of course, $C < G$.  Then (see Lemma 0.2 of \cite{manzwolf}) $G/C$ faithfully and primitively permutes the homogeneous constituents $W_1, W_2, \ldots W_{\ell}$ of $V_C$.  Write $L/C = {\bf{O}}^{p'}(G/C)$.  If $G/L$ is nontrivial, then $V_L$ is a faithful, completely reducible $L$-module, and $$p \ndvd |L : \cent_L(v)|$$ for every $v \in V$.  As any Sylow $p$-subgroup of $L$ is a Sylow $p$-subgroup of $G$, we are done if $L < G$.

Thus we have that $L = G$.  As $C < G$, it follows that $p \dvd |G : C|$.  By Theorem 9.3 of \cite{manzwolf}, it follows that $p = 2$ or $3$, and the $p$-part of $|G : C|$ is either $2$ or $3$.  (Note we are now done if $p \geq 5$.)  Thus $G/C$ has an abelian Sylow $p$-subgroup.  Moreover, by Theorem 9.3 of \cite{manzwolf}, we have that $C/\cent_C(W_i)$ acts transitively on the nonzero elements of $W_i$.  By a theorem of Huppert (see for instance Theorem 6.8 of \cite{manzwolf}), $C/\cent_C(W_i)$ is either semilinear or one of a very short list of exceptional cases.  If $C/\cent_C(W_i)$ is semilinear, then Lemma \ref{semilinearaction} tells us a Sylow $p$-subgroup of $C/\cent_C(W_i)$ is trivial.  If $C/\cent_C(W_i)$ is one of the exceptional cases in Huppert's theorem, then either $p \ndvd |C/\cent_C(W_i)|$, or $p = 3$ and $|C/\cent_C(W_i)|_p = 3$, and thus a Sylow $p$-subgroup of $C/\cent_C(W_i)$ is abelian.  In this case we see that $C$ has an abelian Sylow $p$-subgroup.  In all cases, then, we are done. 
\end{proof}

\section{Bounding the derived length of $D$}

We need an easy lemma regarding characters in $\irr(\varphi)$.

\begin{lemma}\label{overlemma}  Let $G$ be solvable and $N \nrml G$.  Let $\varphi$ be in  $\ibr(G)$ and suppose $\theta \in \ibr(N)$ lies under $\varphi$.  If $\psi \in \irr(\theta)$, then some constituent of $\psi^G$ is in $\irr(\varphi)$.
\end{lemma}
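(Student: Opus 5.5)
The idea is to translate both "$\psi \in \irr(\theta)$" and the desired conclusion into statements about projective indecomposable characters, and then pass between $N$ and $G$ by Frobenius reciprocity. For $\mu \in \ibr(X)$ let $\Phi_\mu$ denote the projective indecomposable character of $X$ attached to $\mu$. Recall that $\Phi_\mu = \sum_{\chi} d_{\chi\mu}\chi$, so $\chi \in \irr(\mu)$ exactly when $[\chi, \Phi_\mu] \neq 0$. It therefore suffices to show that $[\psi^G, \Phi_\varphi] \neq 0$. Indeed, if this holds, some irreducible constituent $\chi$ of $\psi^G$ satisfies $[\chi, \Phi_\varphi] = d_{\chi\varphi} \neq 0$, which means $\chi \in \irr(\varphi)$.

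By ordinary Frobenius reciprocity, $[\psi^G, \Phi_\varphi] = [\psi, (\Phi_\varphi)_N]$. Restriction preserves projectivity, so $(\Phi_\varphi)_N$ is a projective character of $N$. We can therefore write
$$(\Phi_\varphi)_N = \sum_{\eta \in \ibr(N)} c_\eta \Phi_\eta$$
with nonnegative integers $c_\eta$. Since projective indecomposables and irreducible Brauer characters are dual under the usual inner product on $p$-regular classes, we get
$$c_\eta = [(\Phi_\varphi)_N, \eta]^0 = [\Phi_\varphi, \eta^G]^0,$$
where the second equality is reciprocity for this pairing. The last quantity is the multiplicity of $\varphi$ as a Brauer constituent of $\eta^G$. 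Since every $\Phi_\eta$ has nonnegative coefficients on $\irr(N)$, it follows that
$$[\psi, (\Phi_\varphi)_N] \geq c_\theta [\psi, \Phi_\theta] = c_\theta\, d_{\psi\theta}.$$
Here $d_{\psi\theta} > 0$ by hypothesis, so it remains to show $c_\theta > 0$, that is, that $\varphi$ is a Brauer constituent of $\theta^G$.

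This last point is the only step that needs care, because the naive "restriction–induction" reciprocity fails for Brauer characters. I would argue at the level of modules. Let $M$ be a simple $FG$-module affording $\varphi$, where $F$ is a splitting field of characteristic $p$. Since $\theta$ lies under $\varphi$, Clifford's theorem makes $M_N$ semisimple, so there is a nonzero $FN$-homomorphism from a simple module $T$ affording $\theta$ into $M_N$. Nakayama (Frobenius) reciprocity for group algebras gives $\mathrm{Hom}_{FN}(T, M_N) \cong \mathrm{Hom}_{FG}(T^G, M)$. Hence there is a nonzero map $T^G \to M$, which is surjective because $M$ is simple. So $M$ is a composition factor of $T^G$, and $\varphi$ is a constituent of $\theta^G$. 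This gives $c_\theta \geq 1$, and therefore $[\psi^G, \Phi_\varphi] \geq d_{\psi\theta} > 0$, which completes the argument.

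In this plan solvability is not actually needed, since the argument works for any finite group. If one prefers a character-theoretic route, one could instead use the Clifford correspondence for Brauer characters (as in Navarro's book) to reduce to the case where $\theta$ is $G$-invariant. The module-theoretic reciprocity argument above seems cleanest, though.
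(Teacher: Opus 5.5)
Your proof is correct, but it takes a dual route to the paper's. The paper argues directly with Brauer characters. Since $\theta$ is a constituent of $\psi^o$ and $\varphi$ is a constituent of $\theta^G$, it follows that $\varphi$ is a constituent of $(\psi^o)^G=(\psi^G)^o=\sum_\chi[\psi^G,\chi]\chi^o$. Every $\chi^o$ is a nonnegative integer combination of irreducible Brauer characters, so some constituent $\chi$ of $\psi^G$ has $d_{\chi\varphi}\neq 0$.

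Your number $[\psi^G,\Phi_\varphi]=\sum_\chi[\psi^G,\chi]d_{\chi\varphi}$ is exactly the multiplicity of $\varphi$ in $(\psi^G)^o$, so you are computing the same quantity. You evaluate it by restricting $\Phi_\varphi$ to $N$ and expanding in projective indecomposables of $N$. That brings in projectivity of restriction, the duality $[\Phi_\eta,\mu]^0=\delta_{\eta\mu}$, and reciprocity for the $p$-regular pairing. It ends with the expression $\sum_\eta c_\eta d_{\psi\eta}\geq c_\theta d_{\psi\theta}$, which expanding $(\psi^o)^G=\sum_\eta d_{\psi\eta}\eta^G$ gives at once.

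What your write-up adds is a proof of the one nontrivial input: that $\varphi$ is a constituent of $\theta^G$. The paper states this without justification. Your Clifford-plus-Nakayama-reciprocity argument for it is sound and could be inserted as is into the paper's shorter argument. Your observation that solvability is never used applies to the paper's proof as well.
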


\begin{proof}  By assumption, $\theta$ is a constituent of $\psi^o$.  Also, $\varphi$ is a constituent of $\theta^G$.  Thus $\varphi$ is a constituent of $(\psi^G)^o = (\psi^o)^G$, and therefore some constituent $\chi$ of $\psi^G$ must contain $\varphi$. 
\end{proof}

We here prove a more general version of Theorem \ref{main1}.   The following result includes Theorem \ref{main1}, and the proof very much mirrors the argument in \cite{4paper}.

\begin{theorem}\label{main2}  Let $B$ be a block of a solvable group $G$ with defect group $D$.  Let $\varphi \in \ibr(B)$ have height zero.  Then $$\dl(D) \leq k_+(\varphi) + c,$$ where $c = 1$ if $p \geq 5$, $c = 2$ if $p =2$, and $c = 3$ if $p = 3$.
\end{theorem}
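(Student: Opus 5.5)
We argue by induction on $|G|$, mirroring the argument of \cite{4paper}. By Fong reduction for solvable groups we first reduce to the case $\opd(G) = 1$. If $\opd(G) \neq 1$, let $\theta \in \ibr(\opd(G))$ lie under $\varphi$, which is then an ordinary character. The Fong--Reynolds correspondence to the stabilizer $T = G_\theta$ preserves defect groups and heights. It also gives a bijection between $\irr(\varphi)$ and $\irr(\varphi_T)$ that preserves heights, where $\varphi_T \in \ibr(T|\theta)$ induces to $\varphi$. So if $T<G$ we are done by induction. If $T = G$, we pass to a character triple isomorphism with $\opd(G)$ central, and then quotient it out. In this situation the block has full defect and $D$ is a Sylow $p$-subgroup of $G$. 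Height zero of $\varphi$ then means that $\varphi$ has $p'$-degree, and the characters in $\irr(\varphi)$ of height zero are exactly those of $p'$-degree.

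Now let $P = \op(G)$, so that $\cent_G(P) \le P$, and let $V = P/\Phi(P)$. This is a faithful completely reducible module for $G/P$, so the Sylow $p$-subgroup $D/P$ of $G/P$ acts faithfully on $V$. The key dichotomy is whether every orbit of $G$ on $\irr(V)$ (dual to $V$, which is again faithful and completely reducible) has $p'$-length.

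Suppose first that every such orbit has $p'$-length. Theorem \ref{orbits} then gives $\dl(D/P) \le c-1$, since $D/P$ is trivial, abelian, or of derived length at most $2$ according as $p\ge 5$, $p=2$ or $p=3$. We then need $\dl(P) \le k_+(\varphi)+1$ together with $\dl(D) \le \dl(D/P)+\dl(P)$. This is the wrong shape, so the accounting has to be done differently: we peel off derived terms of $D$ one at a time, each time producing a new positive-height character.

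Concretely, suppose some $\lambda \in \irr(V)$ has orbit of length divisible by $p$. Fix a $p$-regular $\mu\in \irr(\Phi(P))$, or more generally take $\lambda \in \irr(P)$ lying under characters with large orbits. Choose $\psi \in \irr(P\,\opd\ldots)$ appropriately and use Lemma \ref{overlemma}, with $\theta=1_P$ (the unique Brauer character of $P$, which lies under $\varphi$), applied to $\psi=\lambda \in \irr(1_P)=\irr(P)$. This yields $\chi \in \irr(\varphi)$ lying over $\lambda$. Since $p$ divides $|G:G_\lambda|$, $\chi$ has degree divisible by $p$ and hence positive height, so $k_+(\varphi)\ge 1$. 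To iterate, we pass to the stabilizer $G_\lambda$ and a Brauer character $\varphi_\lambda$ over $\lambda$ with $\varphi_\lambda^G=\varphi$ (Clifford theory). The goal is to show that $\varphi_\lambda$ lies in a block of $G_\lambda$ with defect group $D_\lambda$ of derived length at least $\dl(D)-1$, has height zero, and satisfies $k_+(\varphi_\lambda) \le k_+(\varphi)-1$, after which induction finishes.

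The main obstacle is this final bookkeeping. We must show that the characters of $\irr(\varphi_\lambda)$ lying over distinct $G$-orbits give distinct elements of $\irr(\varphi)$, which they do because they lie over non-conjugate characters of $P$. We must also show that the one we found is not itself counted in $k_+(\varphi_\lambda)$, and control how $\dl$ drops, following \cite{4paper}, where the analogous counting was done for $\cd$. My first attempt would be to choose $\lambda$ linear on $P$ with kernel containing $P'$, so that $G_\lambda$ acts on the smaller group, and to induct on $\dl(P)$ via the chain $P \ge P' \ge \cdots$.
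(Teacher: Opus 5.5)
\textbf{There is a genuine gap.} The step meant to carry the whole count is the passage to a Brauer character $\varphi_\lambda\in\ibr(G_\lambda)$ ``over $\lambda$'' with $\varphi_\lambda^G=\varphi$, and that character does not exist. Since $P$ is a normal $p$-subgroup, $\ibr(P)=\{1_P\}$, so Clifford theory for Brauer characters relative to $P$ is vacuous: every Brauer character of $G$ or $G_\lambda$ lies over $1_P$ only, and nothing lies over a nontrivial $\lambda\in\irr(P)$. Worse, after your Fong reduction $\varphi(1)$ is prime to $p$, while $p\mid |G:G_\lambda|$, so $\varphi$ cannot be induced from $G_\lambda$ at all. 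Nor is there any reason a Sylow $p$-subgroup of $G_\lambda$ should have derived length at least $\dl(D)-1$. So your large-orbit case only yields $k_+(\varphi)\ge 1$, and the bound $\dl(D)\le k_+(\varphi)+c$ is never reached. Two smaller points: a central $\opd(G)$ cannot simply be factored out after the character triple isomorphism, so keep it as a central $N$ with ${\bf{F}}(G)=N\times P$. Also, $P/\Phi(P)$ need not be completely reducible; Gasch\"utz gives complete reducibility for the $p$-part of ${\bf{F}}(G)/\Phi(G)$.

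\textbf{The missing idea is Taketa's theorem for the $p$-group $P$.} It gives $\dl(P)\le|\cd(P)|$, so $\dl(P)-1$ is at most the number $\cd_p(P)$ of distinct nonlinear degrees of $P$. By Lemma \ref{overlemma}, each nonlinear $\beta\in\irr(P)$ yields some $\chi\in\irr(\varphi)$ over $\alpha\times\beta$ with $p\mid\chi(1)$, and $\beta$'s of different degrees give distinct $\chi$'s. The induction must then go through a \emph{quotient}, not a stabilizer. The paper splits on whether $P$ is abelian:
\begin{itemize}
\item If $P$ is nonabelian, it passes to $G/P'$, using $\opd(G/P')=NP'/P'$ and the $p'$-special extension of $\alpha$. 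This gives $k_+(\varphi)\ge k_+'(\varphi)+\cd_p(P)\ge \dl(D/P')-c+\dl(P')$.
\item If $P$ is abelian, it passes to $G/P$ and uses Theorem \ref{orbits} to produce one positive-height character in $\irr(\varphi)$ whose kernel does not contain $P$.
\end{itemize}
Your own dichotomy would also work once Taketa is added:
\begin{itemize}
\item If every orbit on $\irr(V)$ has $p'$-length, Theorem \ref{orbits} gives $\dl(D/P)\le c-1$. Hence $\dl(D)\le \dl(D/P)+\dl(P)\le (c-1)+\cd_p(P)+1\le k_+(\varphi)+c$ directly, so that case is not ``the wrong shape''.
\item If some orbit has length divisible by $p$, induct on $G/P$ to get $\dl(D/P)\le \bar{k}_+(\varphi)+c$. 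The characters of $\irr_+(\varphi)$ with $P$ in the kernel, those over nonlinear $\beta$, and one over the large-orbit linear $\lambda$ are pairwise distinct. This gives $k_+(\varphi)\ge \bar{k}_+(\varphi)+\cd_p(P)+1$, which finishes the argument.
\end{itemize}
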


\begin{proof}   Let $N = \opd(G)$, and consider a counterexample with $|G : N|$ minimal.  Suppose $\alpha \in \irr(N)$ is covered by $B$, and let $T$ be the stabilizer of $\alpha$ in $G$.  By the Fong-Reynolds theorem (see Theorem 9.14 of \cite{navarrobook}, for example), there is a block $B_1$ of $T$ with defect group conjugate to $D$ such that induction is a height preserving bijection from $B_1$ to $B$ that also preserves decomposition numbers.  

Thus without loss of generality, we may assume $\alpha$ is invariant in $G$.  By Fong's theorem (see 10.20 of \cite{navarrobook}) we have that $D$ is a full Sylow $p$-subgroup of $G$, and $B$ consists of all of the characters (ordinary and Brauer) that lie over $\alpha$.  As $\varphi$ has height zero, it follows that $p \ndvd \varphi(1)$.


Moreover, by applying a character triple isomorphism, we may assume $N$ is central in $G$, and if we set $M/N = \op(G/N)$, then $${\bf{F}}(G) = M = N \times P$$ and ${\bf{F}}(G/N) = M/N \cong P$.

We first show that $P$ must be abelian.  Assume not.  By Lemma 5.1 of  \cite{4paper} we have that $NP'/P' = \opd(G/P')$.   Let ${\widehat{\alpha}}$ denote the unique $p'$-special extension of $\alpha$ to $NP'$, so that we may consider ${\widehat{\alpha}}$ to be a character of $NP'/P'$.  Note that ${\widehat{\alpha}}$ is invariant in $G/P'$.  We let $b'$ be the block of $G/P'$ lying over ${\widehat{\alpha}}$.  As $P'$ is in the kernel of $\varphi$, we may consider $\varphi$ as a character in $\ibr(b')$.  Note that $p \ndvd \varphi(1)$, and thus $D/P'$ is a defect group of $b'$.  We let $k_+'(\varphi)$ denote the number of characters of positive height in $\irr(\varphi) \cap \irr(G/P')$.  As $G$ was chosen so that $|G/N|$ is minimal, we see that $$dl(D/P') \leq  k_+'(\varphi) + c.$$

Let $\cd_p(P)$ be the number of distinct nonlinear character degrees of $P$.  By Taketa's theorem, we have that $$\dl(P) - 1 \leq \cd_p(P).$$  Suppose $\beta \in \irr(P)$ is nonlinear.  By Lemma \ref{overlemma}, we know there exists a character $\chi \in \irr(\varphi)$ lying above $\alpha \times \beta \in \irr(M)$.  Of course, $p \dvd \chi(1)$, so $\chi$ is counted by $k_+(\varphi)$, but $\chi$ is not counted by $k_+'(\varphi)$.  Thus there are at least $\cd_p(P)$ characters counted by $k_+(\varphi)$ that are not counted by $k_+'(\varphi)$.

We now have $$k_+(\varphi) \geq k_+'(\varphi) + \cd_p(P) \geq \dl(D/P') - c + \dl(P) - 1,$$ which in turn is equal to $$\dl(D/P') - c + \dl(P')  \geq \dl(D) - c,$$ and we are done if $P$ is not abelian. 

Thus we now assume that $P$ is abelian.  Consider the block $\bar{B}$ of $\bar{G} = G/P$ containing $\varphi$.  As $\varphi$ has $p'$-degree, then $D/P$ is a defect group for ${\bar{B}}$.  As $G$ was a minimal counterexample, we see that if ${\bar{k}}_+(\varphi)$ denotes the number of characters in $\irr_+(\varphi)$ with $P$ in the kernel, then $$\dl(D/P) \leq {\bar{k}}_+(\varphi) + c.$$  

Again, we have that $\dl(D) \leq \dl(D/P ) + 1$.  Thus, to show that $\dl(D) \leq k_+(\varphi) + c$, it is enough to show that there is a character in $\irr_+(\varphi)$ not counted by ${\bar{k}}_+(\varphi)$, i.e. that does not have $P$ in the kernel.

By Gaschutz' theorem, we see that $G/M$ acts faithfully and completely reducibly on $\irr(K)$, where $K$ is a nontrivial elementary abelian factor group of the $p$-group $M/N \cong P$.  By Lemma \ref{overlemma}, if $\lambda \in \irr(K)$, then there exists a character in $\irr(G | \alpha \times \lambda) \cap \irr(\varphi)$.  If every character in $\irr_+(\varphi)$ was counted by ${\bar{k}}_+(\varphi)$, then we would have $p \ndvd |G : G_{\lambda}|$ for every $\lambda \in \irr(K)$.  By Theorem \ref{orbits}, it then follows that $D/P$ has derived length zero if $p \geq 5$, at most one if $p = 2$, and at most two if $p = 3$.  As $P$ is abelian, it follows that $\dl(D) \leq 1$ if $p \geq 5$, $\dl(D) \leq 2$ if $p = 2$, and $\dl(D) \leq 3$ if $p = 3$.  But this contradicts that $G$ was a counterexample, and we are done. 
\end{proof}

Recall that if $G$ is solvable with $p$-complement $H$, and $\varphi \in \ibr(G)$, then a Fong character associated with $\varphi$ is any character of minimal degree in $\varphi_H$.  Fong characters have a number of interesting properties.  For instance, if $\alpha \in \irr(H)$ is a Fong character for $\varphi$, then $\alpha(1) = \varphi(1)_{p'}$.  Also, the constituents of $\alpha^G$ are exactly the characters in $\irr(\varphi)$.  See \cite{isaacsfong}

\begin{corollary}\label{fong}  Let $G$ be solvable and let $H$ be a $p$-complement in $G$.  Suppose that $\alpha \in \irr(H)$ is a Fong character for some character $\varphi \in \ibr(G)$, and that every constituent of $\alpha^G$ has $p'$-degree.  Let $S$ be a Sylow $p$-subgroup of $G$.  Then $S$ is abelian if $p \geq 5$, $\dl(S) \leq 2$ if $p = 2$, and $\dl(S) \leq 3$ if $p = 3$.
\end{corollary}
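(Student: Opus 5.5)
We reduce Corollary \ref{fong} to Theorem \ref{main1} (equivalently, Theorem \ref{main2} with $k_+(\varphi)=0$). We need two things: that $\varphi$ has height zero in its block $B$, and that every character of $\irr(\varphi)$ has height zero. Once we also know that the defect group $D$ of $B$ is a full Sylow $p$-subgroup of $G$, Theorem \ref{main1} gives the stated bounds on $\dl(D)=\dl(S)$.

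First we use the facts about Fong characters quoted just before the statement. The constituents of $\alpha^G$ are exactly the characters in $\irr(\varphi)$. By hypothesis, every $\chi\in\irr(\varphi)$ therefore has $p'$-degree. Choose any such $\chi$; note that $\irr(\varphi)$ is nonempty, since the projective indecomposable character of $\varphi$ is nonzero. Since $\chi$ lies in $B$, we have $\chi(1)_p=|G|_p/|D|\cdot p^{h(\chi)}$. Because $p\ndvd \chi(1)$, this forces $|D|=|G|_p$ and $h(\chi)=0$. Thus $D$ is a Sylow $p$-subgroup of $G$, and we may take $D=S$ up to conjugacy. Moreover, every character of $\irr(\varphi)$ has degree prime to $p$, so every character of $\irr(\varphi)$ has height zero, and hence $k_+(\varphi)=0$.

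Next we check that $\varphi$ has height zero. Since $\alpha$ is a Fong character, $\alpha(1)=\varphi(1)_{p'}$. Alternatively, $\varphi$ is a constituent of $\chi^o$ for the $p'$-degree $\chi$ above. To get $p\ndvd\varphi(1)$ we use the standard fact, for $p$-solvable groups, that $\varphi(1)_p$ divides $\chi(1)$ whenever $d_{\chi\varphi}\neq 0$. More directly, by the Fong–Swan theorem $\varphi$ lifts to some $\chi_0\in\irr(G)$ with $\chi_0^o=\varphi$. This $\chi_0$ lies in $\irr(\varphi)$, so it has $p'$-degree by the hypothesis, and hence $\varphi(1)=\chi_0(1)$ is prime to $p$. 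As $D$ is a Sylow $p$-subgroup, $\varphi$ then has height zero.

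All hypotheses of Theorem \ref{main1} now hold. It yields that $D\cong S$ is abelian for $p\ge5$, that $\dl(S)\le2$ for $p=2$, and that $\dl(S)\le3$ for $p=3$. Corollary \ref{primcor} then follows, since a primitive character of $H$ is a Fong character (Isaacs \cite{isaacsfong}). The only delicate point is the height-zero property of $\varphi$, and the Fong–Swan lifting argument above handles it cleanly.
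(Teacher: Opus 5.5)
Your proof is correct and follows the paper's argument: the paper likewise notes that a lift of $\varphi$ lies in $\irr(\varphi)$ and hence has $p'$-degree (the paper uses Isaacs' canonical lift, you use a Fong--Swan lift), so $\varphi$ has $p'$-degree, height zero and a Sylow defect group, while every member of $\irr(\varphi)$ has height zero, and then Theorem~\ref{main1} applies. Do delete the aside invoking the ``standard fact'' that $\varphi(1)_p$ divides $\chi(1)$ whenever $d_{\chi\varphi}\neq 0$; it is false even for solvable groups (for $G=S_4$ and $p=2$, the degree-$2$ Brauer character $\varphi$ satisfies $\chi^o=1+\varphi$ for each degree-$3$ character $\chi$), although your Fong--Swan argument never relies on it.
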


\begin{proof}  Our assumption that every constituent $\chi$ of $\alpha^G$ has $p'$-degree means that every character in $\irr(\varphi)$, including the canonical lift of $\varphi$, has $p'$-degree.  Thus $\varphi$ has $p'$-degree and height zero, and the defect group for the block $B$ containing $\varphi$ is a full Sylow $p$-subgroup of $G$.  By Theorem \ref{main1}, we get the desired bounds on the derived length of $S$.
\end{proof}

We mention another useful fact about Fong characters in solvable groups:  all primitive characters of the $p$-complement $H$ are Fong characters \cite{isaacsfong}.  Thus Corollary \ref{primcor} follows immediately from Corollary \ref{fong}.

\section{Examples and questions}

One might ask if we can do better than the statement of Theorem \ref{main1}.   Let $\varphi$ be the unique linear Brauer character in the unique $2$-block $B$ of $S_4$.  Each character in $\irr(\varphi)$ has height zero, yet the defect group for $B$ is the nonabelian Sylow $2$-subgroup of $S_4$.  Thus we cannot reduce our bound of $\dl(D) \leq 2$ for $p = 2$.

It is unclear if the bound of $\dl(D) =3$ can be achieved for $p = 3$.  If so, it likely comes from a block of a group $G$ defined in some sense by the exceptional case in Theorem 9.3 of \cite{manzwolf}.

It is also unclear to what extent this result extends to nonsolvable groups.  Of course, Wolf and Gluck extended their proof of the height zero conjecture for solvable groups to $p$-solvable groups in \cite{gluckwolf}.  Perhaps similar techniques could allow us to extend our result to $p$-solvable groups.

For symmetric or alternating groups, the path forward seems a bit more clear.  While the degrees of Brauer characters in symmetric groups are notoriously difficult to compute, it seems possible we may dispense with the requirement that $\varphi$ has height zero, and rather require that every character in $\irr(\varphi)$ has height zero.  As the heights of characters in $\irr(S_n)$ are easy to understand combinatorially, we then need only show that if all of the partitions that \lq \lq $p$-regularize" to $\varphi$ have height zero, then the weight of the associated block must be less than $p$.  This would force the corresponding defect group to be abelian.

While we are not comfortable conjecturing that a bound such as that in Theorem \ref{main1} holds for all groups, we think exploring this idea merits further study.  Specifically, what information (if any) does knowing $k_+(\varphi)$ tell us about the corresponding defect group for arbitrary finite groups?


\end{document}